\newenvironment{proof}           {\noindent{\bf Proof.} }%
                                 {\null\hfill$\Box$\par\medskip}
\newtheorem{theorem}{Theorem}[section]
\newtheorem{problem}[theorem]{Problem}
\newtheorem{conjecture}[theorem]{Conjecture}
\newtheorem{lemma}[theorem]{Lemma}
\def\imod#1{\allowbreak\mkern10mu({\operator@font mod}\,\,#1)}
\def\v3{\vskip 3 ex}
\begin{document}
\title{Extremal Restraints for Graph Colourings}
\author{Jason I. Brown\footnote{Communicating author.}~, Aysel Erey and Jian Li \\
Department of Mathematics and Statistics\\
Dalhousie University\\
Halifax, Nova Scotia, Canada B3H 3J5}
\date{}

\maketitle

\begin{abstract}
A {\em restraint} on a (finite undirected) graph $G = (V,E)$ is a function $r$ on $V$ such that $r(v)$ is a finite subset of ${\mathbb N}$; a proper vertex colouring $c$ of $G$ is {\em permitted} by $r$ if $c(v)  \not\in r(v)$ for all vertices $v$ of $G$ (we think of $r(v)$ as the set of colours {\em forbidden} at $v$). Given a large number of colors, for restraints $r$ with exactly one colour forbidden at each vertex the smallest number of colorings is permitted when $r$ is a constant function, but the  problem of what restraints permit the largest number of colourings is more difficult. We determine such extremal restraints for complete graphs and trees.
\end{abstract}

\v3
{\bf Keywords:} graph colouring, chromatic polynomial, restraint, restrained chromatic polynomial

\v3
\section{Introduction}

A (proper vertex) {\em k-colouring} of a finite undirected graph $G$ is a function $f:=V(G) \rightarrow \{1,2,\ldots,k\}$ such that for every edge $e = uv$ of $G$, $f(u) \neq f(v)$ (we will denote by $[k] = \{1,2,\ldots,k\}$ the set of {\em colours}).
There are variants of vertex colourings that have been of interest. In a {\em list colouring}, for each vertex $v$ there is a finite list (that is, set) $L(v)$ of colours available for use, and then one wishes to properly colour the vertices such that the colour of $v$ is from $L(v)$. If $|L(v)|=k$ for every vertex $v$, then a list colouring is called a {\em k-list colouring}.  There is a vast literature on list colourings (see, for example, \cite{alon} and \cite{chartrand}, Section 9.2).

We are going to consider a complementary problem, namely colouring the vertices of a graph $G$ where each vertex $v$ has a {\em forbidden} finite set of colours, $r(v) \subset {\mathbb N}$ (we allow $r(v)$  to be equal to the empty set); we call the function $r$ a {\em restraint} on the graph. A restraint $r$ is called an {\em $m$-restraint} if $|r(u)| \leq m$ for every $u\in V(G)$, and $r$ is called a {\em standard $m$-restraint} if $|r(u)| = m$ for every $u\in V(G)$. If $m = 1$ (that is, we forbid at most one colour at each vertex) we omit $m$ from the notation and use the word {\em simple} when discussing such restraints. 

A $k$-colouring $c$ of $G$ is {\em permitted} by restraint $r$ (or $c$ is a colouring {\em with respect to $r$}) if for all vertices of $v$ of $G$, $c(v) \not\in r(v)$.
Restrained colourings arise in a natural way as a graph is sequentially coloured, since the colours already assigned to vertices induce a set of forbidden colours on their uncoloured neighbours. Restrained colourings can also arise in scheduling problems where certain time slots are unavailable for certain nodes (c.f.\ \cite{kubale}). Moreover, restraints are of use in the construction of critical graphs (with respect to colourings) \cite{toft}; a $k$-chromatic graph $G = (V,E)$ is said to be {\em $k$-amenable} iff for every non-constant simple restraint $r:V \rightarrow \{1,2,\ldots,k\}$ permits a $k$-colouring \cite{amenable,roberts}.  Finally, observe that if each vertex $v$ of a graph $G$ has a list of available colours $L(v)$, and, without loss,
\[ L = \bigcup_{v \in V(G)} L(v) \subseteq [k] \]
then setting $r(v) = \{1,2,\ldots,k\} - L(v)$ we see that $G$ is list colourable with respect to the lists $L(v)$ iff $G$ has a $k$-colouring permitted by $r$.

The well known {\em chromatic polynomial} $\pi(G,x)$ (see, for example, \cite{chrompolybook}) counts the number of $x$-colourings of $G$ with $x$ colours. Given a restraint $r$ on graph $G$, we define  the {\em restrained chromatic polynomial} of $G$ with respect to $r$, $\pi_{r}(G,x)$, to be the number of $x$-colourings permitted by restraint $r$. Note that this function extends the definition of chromatic polynomial, as if $r(v) = \emptyset$ for all vertices $v$, then  $\pi_r(G,x) = \pi(G,x)$.

Using standard techniques (i.e. the deletion/contraction formula), we can show that the restrained chromatic polynomial $\pi_{r}(G,x)$ is a polynomial function of $x$ for $x$ sufficiently large, and like chromatic polynomials, the restrained chromatic polynomials of a graph $G$ of order $n$ is monic of degree $n$ with integer coefficients that alternate in sign, but unlike chromatic polynomials, the constant term need not be 0 (we can show that the constant term for any restraint $r$ on $\overline{K_{n}}$ is $(-1)^{n}\prod_{v \in V(G)} |r(v)|)$. Also, note that if $r$ is a constant standard $m$-restraint, say $r(v) = S$ for all $v \in V$, then $\pi_{r}(G,x) = \pi(G,x-m)$ for $x$ at least as large as $\mbox{max}(S)$.

Observe that if $r'$ arises from $r$ by a permutation of colours, then $\pi_r(G,x)=\pi_{r'}(G,x)$ for all $x$ sufficiently large. Thus if $\displaystyle{k = \sum_{v \in V(G)} |r(v)|}$ then we can assume (as we shall do for the rest of this paper) that each $r(v) \subseteq [k]$, and so there are only finitely many restrained chromatic polynomials on a given graph $G$. Hence past some point (past the roots of all of the differences of such polynomials), one polynomial exceeds (or is less) than all of the rest, no matter what $x$ is.
%In all subsequent results, we assume that, unless otherwise noted, $x$ is sufficiently large.

As an example, consider the cycle $C_{3}$. There are essentially three different kinds of standard simple restraints on $C_{3}$, namely $r_{1}= [\{1\}, \{1\}, \{1\}]$,  $r_{2} = [\{1\}, \{2\}, \{1\}]$ and $r_{3}= [\{1\}, \{2\}, \{3\}]$ (If the vertices of $G$ are ordered as $v_1,v_2\dots v_n$, then we usually write $r$ in the form $[r(v_1),r(v_2)\dots r(v_n)]$). For $x\geq 3$, the restrained chromatic polynomials with respect to these restraints can be calculated as
\begin{eqnarray*}
\pi_{r_1}(C_3,x) & = & (x-1)(x-2)(x-3),\\
\pi_{r_2}(C_3,x) & = & (x-2)(x^2-4x+5), \mbox{ and} \\
\pi_{r_3}(C_3,x) & = & 2(x-2)^2+(x-2)(x-3)+(x-3)^3.
\end{eqnarray*}
where $\pi_{r_1}(C_3,x)<\pi_{r_2}(C_3,x)<\pi_{r_3}(C_3,x)$ holds  for $x>3.$

Our focus in this paper is on the following: given a graph $G$ and $x$ large enough, what standard  simple restraints permit the largest/smallest number of $x$-colourings?
In the next section, we  give a complete answer to minimization part of this question, and then turn our attention to the more difficult maximization problem, and in the case of complete graphs and  trees, describe the standard simple restraints which permit the largest number of colourings.

\section{Standard Restraints permitting the extremal number of colourings}

The standard $m$-restraints that permit the smallest number of colourings are easy to describe, and  are, in fact, the same for all graphs. In \cite{carsten} (see also \cite{donner}) it was proved that if a graph $G$ of order $n$ has a list of at least $k$ available colours at every vertex, then the number of list colourings is at least $\pi(G,k)$ for any natural number $k\geq n^{10}$.
As we already pointed out, given a standard $m$-restraint $r$ on a graph $G$ and a natural number $x\geq mn$, we can consider an $x$-colouring permitted by $r$ as a list colouring $L$ where each vertex $v$ has a list $L(v)=[x]-r(v)$ of $x-m$ available colours. Therefore, we derive that for a standard  $m$-restraint $r$ on graph $G$, $\pi_r(G,x) \geq \pi(G,x-m)$ for any natural number $x\geq n^{10}+mn$. But $\pi_{r_{const}^m}(G,x)$ is clearly the number of colourings permitted by the {\em constant} standard $m$-restraint in which $\{1,2,\ldots,m\}$ is restrained at each vertex. In particular, for any graph $G$, the constant standard simple restraints always permit the smallest number of colourings (provided the number of colours is large enough).

The more difficult question is which standard $m$-restraints permit the largest number of colorings; even for standard simple restraints, it appears difficult, so we will focus on this question. As we shall see, the extremal simple restraints differ from graph to graph. We investigate the extremal problem for two important families of graphs: complete graphs and trees.

\subsection{Complete graphs}

First, we prove that for complete graphs, the standard simple restraints that allow for the largest number of colourings are obtained when all vertices have different restrained colours.

\begin{theorem}\label{completethm}
Let $r:  \{ v_{1},  v_{2}, \ldots, v_{n} \} \longrightarrow [n]$ be any standard simple restraint on $K_{n}$ , then for all $x \geq n$, $ \pi_{r}(K_{n}, x) \le \pi_{r'}(K_{n}, x)$, where  $r'(v_{i})=i$  for all $i \le n$.
\end{theorem}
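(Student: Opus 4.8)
The plan is to reduce the statement to a single colour-swap inequality and then iterate. Recall that a proper $x$-colouring of $K_{n}$ is exactly an injection $c:\{v_{1},\ldots,v_{n}\}\to[x]$, so for $x\geq n$ the number $\pi_{r}(K_{n},x)$ counts those injections with $c(v_{i})\neq r(v_{i})$ for every $i$. The target restraint $r'$, where $r'(v_{i})=i$, is a bijection onto $[n]$; since any two bijective restraints onto $[n]$ differ only by a permutation of colours, they have the same restrained chromatic polynomial (the remark in the introduction). Hence it suffices to show that, among all standard simple restraints $r:\{v_{1},\ldots,v_{n}\}\to[n]$, the bijective ones permit the most colourings.

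\textbf{Colour-swap step.} Suppose $r$ is not injective. Since $\sum_{c\in[n]}|r^{-1}(c)|=n=|[n]|$, some colour $a\in[n]$ is forbidden at two or more vertices while some colour $b\in[n]$ is forbidden at none. Fix a vertex $v_{j}$ with $r(v_{j})=a$ and let $\tilde r$ agree with $r$ off $v_{j}$ and satisfy $\tilde r(v_{j})=b$. I claim $\pi_{\tilde r}(K_{n},x)\geq\pi_{r}(K_{n},x)$. Call an $x$-colouring $c$ of $K_{n}$ \emph{admissible} if $c(v_{i})\neq r(v_{i})$ for all $i\neq j$ (equivalently $c(v_{i})\neq\tilde r(v_{i})$ for all $i\neq j$, since $r$ and $\tilde r$ agree there), and for $y\in[x]$ let $f(y)$ be the number of admissible colourings with $c(v_{j})=y$. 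Then $\pi_{r}(K_{n},x)=\bigl(\sum_{y\in[x]}f(y)\bigr)-f(a)$ and $\pi_{\tilde r}(K_{n},x)=\bigl(\sum_{y\in[x]}f(y)\bigr)-f(b)$, so the claim amounts to $f(a)\geq f(b)$. Let $\tau$ be the transposition of $[x]$ interchanging $a$ and $b$; then $c\mapsto\tau\circ c$ is an involution on the $x$-colourings of $K_{n}$. If $c$ is admissible with $c(v_{j})=b$, then $(\tau\circ c)(v_{j})=a$, and $\tau\circ c$ is again admissible: for $i\neq j$ we have $c(v_{i})\neq b$ (the colour $b$ is used at $v_{j}$ and $c$ is injective), so either $c(v_{i})\neq a$ and $(\tau\circ c)(v_{i})=c(v_{i})\neq r(v_{i})$, or $c(v_{i})=a$ and $(\tau\circ c)(v_{i})=b\neq r(v_{i})$ because $b$ is forbidden nowhere. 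Thus $c\mapsto\tau\circ c$ injects the admissible colourings with $c(v_{j})=b$ into those with $c(v_{j})=a$, so $f(b)\leq f(a)$.

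Now iterate. Assign to a restraint $r$ the \emph{defect} $\sum_{c\in[n]}\max\bigl(|r^{-1}(c)|-1,\,0\bigr)$, a nonnegative integer which is $0$ precisely when $r$ is a bijection onto $[n]$. Each colour-swap step strictly decreases the defect (the count at $a$ drops by one, the count at $b$ rises from $0$ to $1$, all others unchanged), so after finitely many steps we reach a bijective restraint $r^{\star}$ with $\pi_{r}(K_{n},x)\leq\pi_{r^{\star}}(K_{n},x)$ for all $x\geq n$; and $\pi_{r^{\star}}(K_{n},x)=\pi_{r'}(K_{n},x)$ since both are bijections onto $[n]$. This gives $\pi_{r}(K_{n},x)\leq\pi_{r'}(K_{n},x)$ for all $x\geq n$.

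\textbf{Main obstacle.} The only real content is the colour-swap step, and inside it the verification that $\tau\circ c$ remains admissible — this is exactly where the hypothesis that $b$ is forbidden at no vertex is used, and it is the part to get right. One might instead hope to exploit the inclusion--exclusion expansion $\pi_{r}(K_{n},x)=\sum_{k\geq 0}(-1)^{k}N_{k}(r)\,(x-k)(x-k-1)\cdots(x-n+1)$, where $N_{k}(r)$ counts the $k$-subsets of vertices on which $r$ is injective; but although $N_{k}(r)\leq N_{k}(r')=\binom{n}{k}$ termwise, the alternating signs block a direct comparison, so the swap argument is the cleaner route.
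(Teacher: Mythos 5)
Your proof is correct and takes essentially the same route as the paper's: repeatedly move the forbidden colour at one vertex from a duplicated colour to a colour forbidden nowhere, justify each step by an injection on colourings built from the transposition of those two colours, and iterate until the restraint is a bijection onto $[n]$, finishing with colour-permutation invariance. Your reduction to $f(a)\ge f(b)$ is just a tidier packaging of the paper's three-case map (identity when $c(v_{j})\neq l$, swap the two colours otherwise), so the underlying argument is the same.
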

\begin{proof}
We show that if two vertices of a complete graph have the same forbidden colour, then we can improve the situation, colouring-wise, by reassigning the restraint at one of these vertices to a colour not forbidden elsewhere.
Let $r_{1}:  \{ v_{1},  v_{2}, \ldots, v_{n} \} \longrightarrow [n]$ be a standard simple restraint on $K_{n}$ with $r_{1}(v_{i}) = r_{1}(v_{j})= t$, and there is an element $l \in [n]$  such that $l \notin r(V(K_{n}))$. Then  setting
\[ r'_{1}(v_{s}) = \left\{ \begin{array}{ll}
                       r(v_{s}) & \mbox{ if } s \neq j\\
                       l        & \mbox{ if } s = j
                       \end{array} \right.
\]
we will show that $\pi_{r_{1}}(K_{n}, x) \le \pi_{r'_{1}}(K_{n}, x)$ for $x \ge n$.

Let $c$ be a proper $x$-colouring of $K_{n}$ permitted by $r_{1}$. We produce for each such $c$ another proper $x$-colouring $c'$ of $K_{n}$ permitted by $r'_{1}$, in a 1--1 fashion.
We take cases based on $c$.

\begin{itemize}
  \item case 1:   $c(v_{j}) \neq l$. The proper $x$-colouring $c$ is also permitted by $r'_{1}$, so take $c'=c$.
  \item case 2:   $c(v_{j}) = l$ and  $t$ is not used by $c$ on the rest of $K_{n}$.  Let $c'$ be the proper $x$-colouring of $K_{n}$ with $c'(v_{u})= c(v_{u})$ if $u \neq j$ and $c'(v_{j})= t$. This gives us a proper $x$-colouring $c'$ permitted by $r'_{1}$.
 \item case 3:   $c(v_{j}) = l$ and  $t$ is used somewhere on the rest of $K_{n}$ by $c$, say vertex $v_{k}$. Let $c'$ be a proper $x$-colouring of $K_{n}$ with $c'(v_{u}) = c(v_{u})$ if $ u \neq  j$ or $k$, $c'(v_{j})= t$ and $c'(v_{k})=l$.  This gives us a proper $x$-colouring $c'$ permitted by $r'_{1}$.
 \end{itemize}

No colouring from one case is a colouring in another case and  different colourings $c$ give rise to different colourings $c'$ within each case. Therefore, we have $\pi_{r_{1}}(K_{n}, x) \le \pi_{r'_{1}}(K_{n}, x)$ for $x \ge n$.

If $r$ is not 1-1, we start with $r_{1} = r$ and repeat the argument until we arrive at a simple restraint $r^{\ast}$ that is 1-1 on $V(G)$ and $\pi_{r}(K_{n}, x) \le \pi_{r^{\ast}}(K_{n}, x)$ for $x \ge n$. Clearly $r^{\ast}$ arises from $r'$ by a permutation of colours, so $\pi_{r}(K_{n}, x) \le \pi_{r^{\ast}}(K_{n}, x) = \pi_{r'}(K_{n}, x)$ for $x \ge n$ and we are done.
\end{proof}

\subsection{Trees}

We now consider extremal simple restraints for trees, but first we need some notation.
Suppose $G$ is a connected bipartite graph with bipartition $(A,B)$. Then a standard simple  restraint is called an \textit{alternating restraint}, denoted $r_{alt}$, if $r_{alt}$ is constant on both $A$ and $B$ individually but $r_{alt}(A)\neq r_{alt}(B)$. We show that for trees alternating restraints permit the largest number of colorings.

Before we begin, though, we will need some notation and a lemma. If $r$ is a restraint on $G$ and $H$ is an induced subgraph of $G$ then $r|_H$, the {\em restriction of $r$ to $H$}, denotes the restraint function induced by $r$ on the vertex set of $H$ (if $A$ is a vertex subset of $G$ then $G_A$ is the subgraph induced by $A$).

\begin{lemma}\label{tree3}
Let $T$ be a tree on $n$ vertices and $r:V(T)\rightarrow [n]$ be a $2$-restraint such that there is at most one vertex $w$ of $T$ with $|r(w)| = 2$. Then for any $k \geq \operatorname{max}\{3,n\}$, $\pi_r(T,k) > 0$.
\end{lemma}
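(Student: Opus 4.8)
The statement asks me to show that a tree $T$ on $n$ vertices with a $2$-restraint $r$ (where at most one vertex $w$ has $|r(w)|=2$, all others having $|r(v)|\le 1$) still admits a permitted colouring whenever $k\ge\max\{3,n\}$. The natural approach is induction on $n$, peeling off a leaf. The plan is to root $T$ and pick a leaf $v$; let $u$ be its unique neighbour. I would like to delete $v$, apply the inductive hypothesis to $T-v$ to get a colouring permitted by $r|_{T-v}$, and then extend it to $v$. Extending to $v$ is easy: $v$ has only one neighbour $u$, so the colour $c(u)$ together with the at most two forbidden colours in $r(v)$ rule out at most $3$ colours; since $k\ge 3$ this... wait, I need $k\ge 4$ to be safe if $|r(v)|=2$. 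So the careful bookkeeping is: if the special vertex $w$ (the one with $|r(w)|=2$, if it exists) can be chosen \emph{not} to be the leaf $v$, then $T-v$ still has at most one vertex of restraint-size $2$, and when extending to $v$ we forbid at most $1+1=2$ colours, leaving $\ge k-2\ge 1$ choices. The only problematic case is when $n$ is small or when $w$ is forced to be a leaf.

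**Handling the base cases and the forced-leaf case.**

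First I would dispose of small $n$ directly: for $n\le 2$ the bound $k\ge 3$ and a short case check suffice. For the inductive step with $n\ge 3$, if $T$ has no vertex of restraint-size $2$, then $r$ is a simple restraint and I can choose \emph{any} leaf $v$; the inductive hypothesis gives a colouring of $T-v$, and extending forbids $\le 2$ colours, so $\ge k-2\ge 1$ choices remain. If $T$ has exactly one vertex $w$ with $|r(w)|=2$: since $n\ge 3$, $T$ has at least two leaves (a tree on $\ge 2$ vertices has $\ge 2$ leaves), so I can pick a leaf $v\ne w$. Then $T-v$ is a tree on $n-1$ vertices whose restraint still has at most one size-$2$ vertex (namely $w$), and $k\ge n>n-1$ and $k\ge 3$, so the inductive hypothesis applies. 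Extend to $v$: the neighbour $u$ contributes one forbidden colour and $r(v)$ contributes at most one more, so $\ge k-2\ge 1$ colours are available at $v$. This closes the induction.

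**The main obstacle.**

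The delicate point — and where I expect the real work to lie — is making sure the hypothesis ``$k\ge\max\{3,n\}$'' is genuinely strong enough at \emph{every} level of the recursion, because $n$ drops by one each time but $k$ stays fixed, so the inequality only gets slacker; that direction is fine. The subtler issue is the interplay between the two clauses: we need $k\ge 3$ precisely to extend at a \emph{simple} leaf (forbidding $c(u)$ plus one colour of $r(v)$), and we need the ability to avoid $w$ to keep $|r(v)|\le 1$ at the leaf we delete. If one worried about a tree that is a single edge plus a pendant, or a star centred at $w$, these are exactly the configurations the ``$n\ge 3\Rightarrow$ two leaves'' observation rescues. So the proof is essentially: (i) base cases $n\le 2$; (ii) for $n\ge 3$ choose a leaf distinct from the (possible) size-$2$ vertex, apply induction to $T$ minus that leaf, and greedily colour the leaf last using $k\ge 3$. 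No step requires heavy computation; the only thing to state carefully is why a leaf avoiding $w$ exists and why the restriction $r|_{T-v}$ inherits the ``at most one size-$2$ vertex'' property, which is immediate since removing a leaf only removes that one vertex's restraint value.
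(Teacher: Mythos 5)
Your proposal is correct and follows essentially the same route as the paper: induct on the order, use the fact that a tree has at least two leaves to select a leaf whose restraint has size at most one, colour the rest by induction, and extend greedily at that leaf using $k\ge 3$. The only cosmetic difference is that you treat $n=2$ as a base case and split off the ``no size-$2$ vertex'' subcase, whereas the paper starts the induction at $n=1$ and handles all $n\ge 2$ uniformly by choosing a leaf $u$ with $|r(u)|\le 1$.
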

\begin{proof} The proof is by induction on $n$. For $n= 1$ the proof is trivial, so we assume that $n \geq 2$. As $T$ has at least two leaves, let $u$ be a leaf of $T$ such that $|r(u)|\leq 1$ and $v$ be the stem of $u$. By induction we can colour $T-u$ with respect to $r|_{T-u}$. As $k \geq 3$, there is a colour different from $r(u)$ and the colour assigned to $v$, so we can extend the colouring to one permitted by $r$ on all of $T$.
\end{proof}

\begin{theorem}\label{treemaxmin}Let $T$ be a tree on $n$ vertices and $r:V(T)\rightarrow [n]$ be a standard simple restraint that is not an alternating restraint, then for $k \geq n$, $$\pi_r(T,k) < \pi_{r_{alt}}(T,k).$$
\end{theorem}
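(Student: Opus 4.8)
The plan is to prove, simultaneously by strong induction on $n$, the theorem's inequality together with an auxiliary bound on colourings that fix the colour of one vertex. For a standard simple restraint $s$ on a tree $S$, a vertex $w$ of $S$, and a colour $j$, let $N_s(S,w,j)$ denote the number of $k$-colourings of $S$ permitted by $s$ in which $w$ is coloured $j$. The two assertions carried through the induction are: for every tree $T$ on $n$ vertices, every standard simple restraint $r$ on $T$, and every integer $k\ge n$, (I) $\pi_r(T,k)\le\pi_{r_{alt}}(T,k)$, with equality if and only if $r$ is alternating; and (II) for every vertex $v$ of $T$ and every colour $j$, $N_r(T,v,j)\le N_{r_{alt}}(T,v,\gamma)$, where $\gamma$ is the colour that $r_{alt}$ forbids on the part of $T$ not containing $v$. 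Both right-hand sides are independent of the particular alternating restraint chosen, since any two differ by a permutation of $\mathbb N$. The base cases $n\le 3$, where $T$ is $K_1$, $K_2$, or $P_3$, are checked by hand: each carries only finitely many standard simple restraints up to renaming colours, and comparing the short polynomials that result yields (I) and (II); for instance on $P_3$ one finds $\pi_{r_{alt}}(P_3,k)-\pi_r(P_3,k)\in\{2k-3,\,k-1,\,1\}$ for the three non-alternating restraint types.

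The engine of the inductive step is the leaf-deletion identity: if $u$ is a leaf of $T$ with neighbour $v$, then for every $k$
\[
\pi_r(T,k)=(k-2)\,\pi_{r|_{T-u}}(T-u,k)+N_{r|_{T-u}}\big(T-u,v,r(u)\big),
\]
since a colouring of $T-u$ permitted by $r|_{T-u}$ has $k-1$ extensions to $u$ when $v$ already carries the colour forbidden at $u$ and $k-2$ extensions otherwise; the identical bookkeeping applies to $N$-quantities. To obtain (II) for a tree $T$ from (I) for strictly smaller trees, I delete the pinned vertex $v$ itself. Writing $T_1,\dots,T_d$ for the components of $T-v$ and $u_i$ for the neighbour of $v$ in $T_i$, one has (assuming $j\neq r(v)$; the case $j=r(v)$ is trivial)
\[
N_r(T,v,j)=\prod_{i=1}^{d}\Big(\pi_{r|_{T_i}}(T_i,k)-N_{r|_{T_i}}(T_i,u_i,j)\Big)
\quad\text{and}\quad
N_{r_{alt}}(T,v,\gamma)=\prod_{i=1}^{d}\pi_{r_{alt}|_{T_i}}(T_i,k),
\]
the second equality because each $u_i$ lies in the part not containing $v$, so $r_{alt}$ already forbids $\gamma$ at $u_i$ and fixing $c(v)=\gamma$ costs nothing. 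Each factor on the left is nonnegative and at most $\pi_{r|_{T_i}}(T_i,k)$, which by (I) for the smaller tree $T_i$ is at most $\pi_{r_{alt}|_{T_i}}(T_i,k)$ (single-vertex components contribute the obvious $(k-1)-[j\neq r(u_i)]\le k-1$); multiplying through gives (II) for $T$.

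For (I) when $n\ge 4$, I choose the leaf to delete with care: if $r$ is not alternating, then some leaf $u$ has $r|_{T-u}$ still not alternating. Otherwise, combining the constraints that two distinct leaf-deletions would impose — according as the two leaves lie in the same part or in opposite parts — forces $r$ to be constant on each part of $T$ with two distinct values, i.e.\ forces $r$ to be alternating, the sole exception being $T=P_3$, which is excluded here. Fix such a $u$, with neighbour $v$; note that $r_{alt}|_{T-u}$ is an alternating restraint on $T-u$ and that $r_{alt}(u)$ is exactly the colour it forbids on the part of $T-u$ not containing $v$. Subtracting the leaf-deletion identities for $r$ and $r_{alt}$,
\[
\pi_{r_{alt}}(T,k)-\pi_r(T,k)=(k-2)\Big(\pi_{r_{alt}|_{T-u}}(T-u,k)-\pi_{r|_{T-u}}(T-u,k)\Big)+(b-a),
\]
where $a=N_{r|_{T-u}}(T-u,v,r(u))$ and $b=N_{r_{alt}|_{T-u}}(T-u,v,r_{alt}(u))$. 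The bracketed difference is strictly positive, by (I) in its strict form for $T-u$ (legitimate because $r|_{T-u}$ is non-alternating) together with $k-2>0$, while $b-a\ge 0$ is exactly (II) for $T-u$ applied at $v$ with colour $j=r(u)$. Hence $\pi_r(T,k)<\pi_{r_{alt}}(T,k)$; together with the trivial fact that all alternating restraints give the same polynomial, this completes (I), and the theorem is the strict half of (I).

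The main obstacle, and the reason for dragging (II) along, is exactly the comparison $a\le b$: it is a statement about how often a prescribed colour can occur at a single vertex, strictly finer than the totals in (I), so it must occupy its own slot in the induction — though, conveniently, its inductive step (the delete-$v$ argument) consumes only (I) for strictly smaller trees, so there is no circularity. A second, genuine subtlety is that the ``delete a leaf while keeping the restraint non-alternating'' move really does fail on $P_3$ — e.g.\ for $r=[\{1\},\{2\},\{3\}]$ both leaf-deletions are alternating — which is why $n\le 3$ must be treated as base cases rather than folded into the inductive step.
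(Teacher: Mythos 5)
Your proof is correct, and while it shares the paper's basic skeleton --- induction on $n$ via deleting a leaf $u$ with neighbour $v$, the identity $\pi_r(T,k)=(k-2)\,\pi_{r|_{T-u}}(T-u,k)+x$, where $x$ counts the permitted colourings of $T-u$ giving $v$ the colour $r(u)$ (your $N$-quantity, the paper's $x_{n-1}^r$), and the observation that for $r_{alt}$ this extra term is the full product of the polynomials of the components of $T-\{u,v\}$ --- the way you control that extra term is genuinely different. The paper compares the two summands separately by a rather loose ``each piece is maximized when $r$ is alternating'' argument, invoking its $2$-restraint lemma (Lemma~\ref{tree3}) only to get $x>0$ and merely asserting strictness; you instead strengthen the induction hypothesis with the pinned-vertex inequality (II), prove (II) by deleting the pinned vertex and applying (I) to the resulting components (so the $2$-restraint lemma is not needed at all), and extract strictness from the $(k-2)\bigl(\pi_{r_{alt}|_{T-u}}-\pi_{r|_{T-u}}\bigr)$ term, which is what forces your extra step of choosing the leaf so that $r|_{T-u}$ remains non-alternating (a correct claim for $n\ge 4$, with $P_3$ rightly pushed into the base cases). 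The payoff of your route is a tighter logical structure: the equality case is explicit, and the comparison $a\le b$ is an honest inequality between counts rather than a simultaneous-maximization claim; the cost is the bookkeeping of carrying (II) through the induction and the leaf-selection argument, which you only sketch but which does check out.
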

\begin{proof}
We proceed by induction on $n$. We leave it to the reader to check the basis step $n=2$. Suppose that $n\geq 3$, $u$ be a leaf of $T$ and $v$ be the neighbor of $u$. Also, let $v_1,v_2,\dots v_m$ be the vertices of the set $N(v)-\{u\}$. Let $T'=T-u$ and $T''=T-\{u,v\}$. Let $T^i$ be the connected component of $T''$ which contains the vertex $v_i$. Given a simple restraint $r$ on $T$, we consider two cases:
\begin{itemize}
\item case 1: $r(u)=r(v).$\newline
Once all the vertices of $T'$ are coloured with respect to $r|_{T'}$, $u$ has $k-2$ choices because it cannot get the colour $r(u)$ and the colour assigned to $v$ which different from $r(u)$. Thus,

\begin{equation}\label{treecase1}
\pi_r(T,k)=(k-2)\pi_{r|_{T'}}(T',k).
\end{equation}

\item case 2: $r(u)\neq r(v).$\newline
In this case we define $x_{n-1}^r$ (respectively $y_{n-1}^r$) to be the number of $k$-colourings of $T'$ permitted by $r|_{T'}$ where $v$ gets (respectively does not get) the colour $r(u)$. Now it can be verified that $\pi_{r|_{T'}}(T',k)=x_{n-1}^r+y_{n-1}^r$ and $\pi_r(T,k)=(k-1)x_{n-1}^r+(k-2)y_{n-1}^r$. In other words,
\begin{equation}\label{treecase2}
\pi_r(T,k)=(k-2)\pi_{r|_{T'}}(T',k)+x_{n-1}^r
\end{equation}
Also let us define a restraint function $r_i:V(T^i)\rightarrow {\mathbb N} $ on each component $T^i$ for $i=1,\dots m$ as follows:
\begin{enumerate}
\item If $r(v_i)=r(u)$ then $r_i(w):=r(w)$ for each $w\in V(T^i)$
\item If $r(v_i)\neq r(u)$ then  \[r_i(w) := \left\{ \begin{array}{ll}
                      \{r(v_i),r(u)\} & \mbox{ if  $w=v_i$}\\
                       r(w)        & \mbox{ if $w\neq v_i$ }
                       \end{array} \right. \]
 \textit{for each } $w\in V(T^i).$
\end{enumerate}
Now,  $\displaystyle{x_{n-1}^r=\prod_{i=1}^m \pi_{r_i}(T^i,k)}$ which is strictly larger than $0$ by Lemma~\ref{tree3}.
\end{itemize}

By comparing Equations (\ref{treecase1}) and (\ref{treecase2}), it is clear that  $\pi_r(T,k)$ will be maximized in case 2. Since $r(V(T^i))\subseteq r_i(V(T^i))$,  $\pi_{r_i}(T,k)$ is maximized when $r(v_i)=r(u)$, that is, when $r_i$ and $r|_{T^i}$ are equal to each other for each $i=1\dots m$. Moreover,  $\pi_{r|_{T^i}}(T^i,k)$ is maximized when $r|_{T^i}$ is alternating on $T^i$ for each $i=1\dots m$, and $\pi_{r|_{T'}}(T',k)$ is maximized when $r|_{T'}$ is alternating on $T'$ by the induction hypothesis. Hence, $\pi_r(T,k)$ attains its maximum value when $r$ is alternating on $T$. Moreover, this value is strictly larger than all the others. Therefore, the result follows.
\end{proof}

\section{Concluding remarks and open problems}

It is worth noting that for complete graphs and trees the simple restraints which maximize the restrained chromatic polynomials are all minimal colourings, that is, colourings with the smallest number of colours. One might wonder therefore  whether this always holds, but unfortunately this is not always the case. For consider the graph $G$ in Figure~\ref{twotriangles} which has chromatic number $3$. It is easy to see that there is essentially only one standard simple restraint ($r_2=[1,2,3,1,2,3]$) which is a proper colouring of the graph with three colours.  If $r_1=[1,2,3,1,2,4]$, then some direct computations show that  $$\pi_{r_1}(G,x)-\pi_{r_2}(G,x)=(x-3)^2>0$$ for all $x$ large enough. It follows that  the simple restraint which maximizes the restrained chromatic polynomial of $G$ cannot be a minimal colouring of the graph.

 \begin{figure} [ht]
  \begin{center}
   \includegraphics[width=4in]{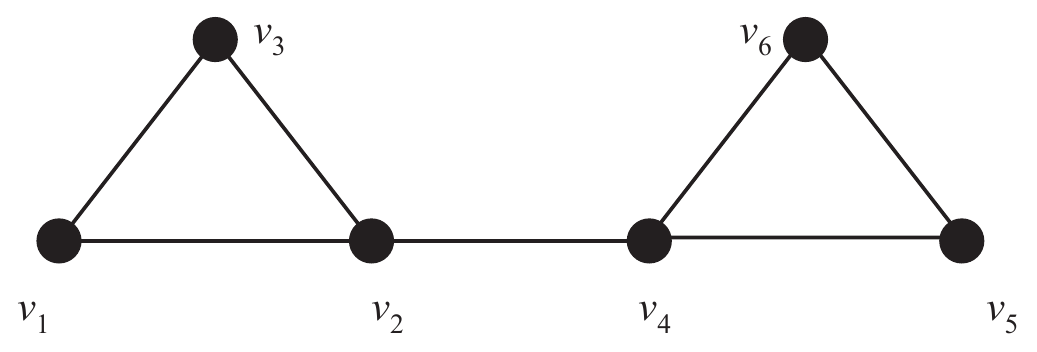}
   \caption{Graph whose standard simple restraint permitting the largest number of colourings is not a minimal colouring.}
  \label{twotriangles}
  \end{center}
  \end{figure}

We believe, however, that for bipartite graphs the simple restraint which maximizes the restrained chromatic polynomial is a minimal colouring of the graph. More specifically, we propose the following:

\begin{conjecture}\label{bipartiteconjecture}
Let $r:  \{ v_{1},  v_{2}, \ldots, v_{n} \} \longrightarrow [n]$ be any standard simple restraint  on a bipartite graph $G$ and $x$ large enough. Then,
 $\pi_{r}(G, x) \le  \pi_{r_{alt}}(G, x)$.
\end{conjecture}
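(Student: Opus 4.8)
The plan is to run the same inductive scheme as in the proof of Theorem~\ref{treemaxmin}. Since $\pi_r$ is multiplicative over connected components, one first reduces to $G$ connected, then inducts on $n=|V(G)|$, splitting the inductive step on whether $G$ has a leaf. If $G$ has a leaf $u$ with neighbour $v$, the two identities from the tree proof survive verbatim: with $G'=G-u$, one has $\pi_r(G,k)=(k-2)\,\pi_{r|_{G'}}(G',k)$ when $r(u)=r(v)$, and $\pi_r(G,k)=(k-2)\,\pi_{r|_{G'}}(G',k)+x^r$ when $r(u)\neq r(v)$, where $x^r$ is the number of $k$-colourings of $G'$ permitted by $r|_{G'}$ in which $v$ receives colour $r(u)$. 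By induction $\pi_{r|_{G'}}(G',k)$ is driven to its maximum by an alternating restraint on $G'$, so the issue is to push $x^r$ toward the alternating configuration while keeping it strictly positive. In a tree $x^r$ conveniently factors over the components of $G-\{u,v\}$; in general it is only $\pi_{r^{+}}(G-\{u,v\},k)$, where $r^{+}$ agrees with $r$ except that it additionally forbids $r(u)$ at every neighbour of $v$ --- a $2$-restraint whose size-$2$ vertices all lie on one side of the bipartition of $G-\{u,v\}$. So the inductive hypothesis has to be strengthened to cover such $2$-restraints (extremal restraint still alternating), together with a version of Lemma~\ref{tree3} guaranteeing $\pi_{r^{+}}>0$ for $k$ large; proving this strengthening is the first real step, but it looks reachable by the same leaf-peeling.

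The genuine obstacle is the case $\delta(G)\geq 2$, with no leaf to peel. The obvious substitute is deletion--contraction: for an edge $e=uv$ on a cycle, $\pi_r(G,k)=\pi_r(G-e,k)-\pi_{\hat r}(G/e,k)$, where $\hat r$ equals $r$ off $e$ and forbids $r(u)\cup r(v)$ at the merged vertex. But contracting a cycle edge of a bipartite graph destroys bipartiteness, so the induction leaves the class the statement is about, and ``$r$ versus $r_{alt}$'' on $G/e$ is no longer meaningful. Getting past this seems to need one of: (i) a monotonicity/extremal principle for $2$-restraints on the near-bipartite quotients, strong enough to bound $\pi_{\hat r}(G/e,k)$ in tandem with the main inequality --- essentially a stronger form of the conjecture; or (ii) building $G$ from a cycle by successively adding ears and tracking, via a transfer-matrix computation along each added path, how the extremal restraint changes. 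I expect (i) to be the crux: it is exactly the absence of any usable extremal statement on the intermediate non-bipartite graphs that keeps the problem open.

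A parallel line worth pursuing is a local-move argument in the style of the proof of Theorem~\ref{completethm}: show that if $r$ is not alternating then some single reassignment of $r$ at one vertex weakly increases $\pi_r(G,k)$, with at least one such move strict, and that iterating terminates at $r_{alt}$. One needs two kinds of move. First, if an edge $uv$ has $r(u)=r(v)$, reassign $r(v)$ to a colour unused by $r$ elsewhere; the required injection on colourings can be attempted via Kempe-chain swaps on the two colours involved, the delicate point being that such a swap can move a vertex onto its forbidden colour, so the chain must be truncated or the swap corrected in a canonical (hence injective) way. Second, and harder, one must homogenise a bipartition class of $r$, equivalently drive the number of colours used by $r$ down to two; this move relaxes no constraint, so its monotonicity is not evident, and establishing it --- or replacing it by a more global argument --- is what a complete proof will ultimately rest on.
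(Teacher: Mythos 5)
You have not proved the statement, and neither does the paper: this is Conjecture~\ref{bipartiteconjecture}, which the authors explicitly leave open, offering only computational verification for all bipartite graphs of order at most $6$. So there is no proof in the paper to compare your argument against, and your text, read as a proof attempt, has the gap you yourself identify: every route you outline (extending the leaf-peeling induction of Theorem~\ref{treemaxmin} with a strengthened, $2$-restraint version of Lemma~\ref{tree3}; deletion--contraction on a cycle edge; local recolouring moves in the style of Theorem~\ref{completethm}) is left unfinished precisely at the point where the real difficulty sits. In particular, the case $\delta(G)\geq 2$ is never handled --- the tree argument depends essentially on the existence of a leaf, since it is the leaf that makes $x_{n-1}^r$ factor over the components of $T-\{u,v\}$ and lets Lemma~\ref{tree3} certify positivity; in a $2$-connected bipartite graph (already $C_4$ with chords, or $K_{3,3}$) no such decomposition is available, and contraction leaves the bipartite class, so ``alternating'' loses its meaning on the intermediate graphs. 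Your proposed fixes (a monotonicity principle for $2$-restraints on near-bipartite quotients, an ear-decomposition transfer-matrix analysis, or a homogenisation move driving the restraint to two colours) are stated as hopes, not established lemmas, and the homogenisation step in particular relaxes no constraint, so even its weak monotonicity is unproven.

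To your credit, your diagnosis of where the obstruction lies is accurate and matches why the statement remains a conjecture: the tree proof's two identities (Equations~(\ref{treecase1}) and~(\ref{treecase2})) do survive for a leaf in any graph, and the missing ingredient is exactly an extremal statement robust enough to pass through either non-bipartite contractions or $2$-restraints concentrated on one side of the bipartition. But a correct review of your submission must record that no complete argument is given, and that the statement's status in the paper is an open problem, not a theorem.
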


We verified that the conjecture above is correct for all such graphs of order at most $6$. Indeed, we know that among all graphs of order at most $6$, there are only two graphs where the standard simple restraint which maximizes the restrained chromatic polynomial is not a minimal colouring of the graph. Therefore, we suggest the following interesting problem:

\begin{problem} Is it true that for almost all graphs the standard simple restraint which maximizes the restrained chromatic polynomial is a minimal colouring of the graph?
\end{problem}
\vskip0.4in

\noindent {\bf \large Acknowledgments} \\
The authors would like to thank the referee for his help and insightful comments.
This research was partially supported by a grant from NSERC.

\end{document}